\newcommand{\bbC}{\mathbb{C}}
\newcommand{\bbN}{\mathbb{N}}
\newcommand{\bbR}{\mathbb{R}}
\newcommand{\calL}{\mathcal{L}}
\newcommand{\suchthat}{\,|\,} % separator within sets
\DeclareMathOperator{\id}{id} % identity operator
\newcommand{\norm}[1]{\left\lVert #1 \right\rVert} % norm
\newcommand{\spec}{\sigma} % spectrum
\newcommand{\sa}{{\operatorname{sa}}}
\theoremstyle{definition}
\newtheorem{definition}{Definition}[section]
\newtheorem{remarks}[definition]{Remarks}
\newtheorem{example}[definition]{Example}
\theoremstyle{plain}
\newtheorem{theorem}[definition]{Theorem}
\newtheorem{corollary}[definition]{Corollary}
\numberwithin{equation}{section}
\begin{document}

\title[The Huijsmans--de Pagter problem in finite dimensions]{A note on the Huijsmans--de Pagter problem on finite dimensional ordered vector spaces}
\author{Catalin Badea}
\address[C. Badea]{Univ Lille, CNRS, UMR 8524 - Laboratoire Paul Painlevé, France}
\email{cbadea@univ-lille.fr}
\author{Jochen Gl\"uck}
\address[J. Gl\"uck]{Bergische Universit\"at Wuppertal, Fakult\"at f\"ur Mathematik und Naturwissenschaften, Gaußstr.\ 20, 42119 Wuppertal, Germany}
\email{glueck@uni-wuppertal.de}
\subjclass[2020]{47B65}
\keywords{Positive operator; trivial spectrum; domination of identity}
\date{4th of May, 2024}
\begin{abstract} 
	A classical problem posed in 1992 by Huijsmans and de Pagter asks whether, 
	for every positive operator $T$ on a Banach lattice with spectrum $\sigma(T) = \{1\}$,
	the inequality $T \ge \operatorname{\id}$ holds true. 
	While the problem remains unsolved in its entirety, a positive solution is known in finite dimensions.
	In the broader context of ordered Banach spaces, Drnovšek provided an infinite-dimensional counterexample. 
	In this note, we demonstrate the existence of finite-dimensional counterexamples, 
	specifically on the ice cream cone and on a polyhedral cone in $\mathbb{R}^3$. 
	On the other hand, taking inspiration from the notion of $m$-isometries, we establish that each counterexample must contain a Jordan block of size at least $3$.
\end{abstract}
 \thanks{This work was supported in part by the Labex CEMPI (grant ANR-11-LABX-0007-01) and by the French-German 2023 project ``Théorie des opérateurs et interactions'' of University of Lille.}

\maketitle

\section{Introduction} 
\label{section:introduction}
In 1941, Gelfand \cite{Gelfand1941} proved that a doubly power-bounded element $a$ of a complex unital Banach algebra with spectrum $\spec(a) = \{1\}$ is the identity element $1$. Here, double power-boundedness means that $a$ is invertible and $\sup_{n\in \mathbb{Z}}\|a^n\| < +\infty$. For a comprehensive survey of this result and of related topics, readers are directed to \cite{Zemanek1994}.

In \cite[Problem 11, Question 1, p.\,149]{HuijsmansLuxemburg1992}, Huijsmans and de Pagter posed the following question: If $T$ is a positive linear operator on a Banach lattice with spectrum $\spec(T) = \{1\}$, does it follow that $T \geq \operatorname{id}$? In this manuscript $\id$ denotes the identity operator.

The Huijsmans--de Pagter problem in its full generality is still open, although the answer is known to be positive in various special cases. For instance, if $T$ is disjointness preserving, then $\spec(T) = \{1\}$ implies $T = \operatorname{id}$ (even if $T$ is not assumed to be positive), as shown in \cite{Arendt1983, HuijsmansDePagter1992}. This includes the special case where $T$ is a lattice homomorphism, addressed earlier in \cite{SchaeferWolffArendt1978, Huijsmans1988, Huijsmans1989}. For positive operators whose negative powers satisfy a certain spectral growth condition, a positive answer is provided in \cite[Theorem 5.1]{Zhang1993}. This encompasses the case of positive operators whose single spectral value $1$ is a pole of the resolvent, as a special case \cite[Theorem 5.3]{Zhang1993}.

Particularly, the answer is positive in finite dimensions, with a much simpler proof available in this case, as seen in \cite[Theorem 4.1]{Zhang1993}; see also \cite[Theorems~ 4.1 and ~4.3]{Mouton2003}. For a further discussion of the problem in infinite dimensions, we also refer to \cite{Zhang1992}.

It is natural to inquire about the same problem when the positive operator $T$ does not act on a Banach lattice but, more generally, on an ordered Banach space.
In \cite[Theorem 2]{Drnovsek2018}, Drnovšek constructed an infinite-dimensional counterexample. Specifically, he constructed an ordered Banach algebra $\mathcal{A}$ and a positive element $a \in \mathcal{A}$ such that $\spec(a) = {1}$, but $a \not\geq 1$, where $1$ denotes the identity element in $\mathcal{A}$. Consequently, the operator $T: \mathcal{A} \to \mathcal{A}$, $b \mapsto ab$, is also positive and satisfies $\spec(T) = {1}$, but $T \not\geq \operatorname{id}$ since $T1 = a \not\geq 1 = \operatorname{id} 1$. Drnovšek observed in \cite[Proposition 3]{Drnovsek2018} that his construction cannot be adapted to the finite-dimensional case. 
Related results in ordered Banach algebras can be found in \cite{BraatvedtBritsRaubenheimer2009}.

In this note, we provide two finite-dimensional counterexamples (Examples~\ref{exa:ice-cream} and~\ref{exa:four-ray}) that give a negative answer to the Huijsmans--de Pagter problem for positive linear operators on finite-dimensional ordered vector spaces. In the first example, the cone corresponds to the ice cream cone in four (or three) dimensions. In the second example, the cone is polyhedral, with four extreme rays, in three dimensions.

In Section~\ref{section:generalized-ev}, we demonstrate a partial positive answer: if all Jordan blocks of $T$ have size $\leq 2$, then $\spec(T) = \{1\}$ and $T \geq 0$ implies that $T \geq \operatorname{id}$ (Corollary~\ref{cor:jordan-2}). However, the situation becomes more complex if $T$ has larger Jordan blocks; see Theorem~\ref{thm:generalized-ev-higher-rank} for details. Our arguments closely relate to the theory of $m$-isometries, see Remark~\ref{rem:m-isometries}(b).

\subsection*{Notation and terminology for ordered vector spaces}

An \emph{ordered vector space} refers to a real vector space $V$ together with a subset $V_+$ satisfying $\alpha V_+ + \beta V_+ \subseteq V_+$ for all scalars $\alpha, \beta \in [0,\infty)$, and $V_+ \cap -V_+ = \{0\}$. The set $V_+$ is referred to as the \emph{positive cone} or simply the \emph{cone} in the ordered vector space $V$. It induces a partial order $\leq$ on $V$ defined by $v \leq w$ if and only if $w-v \geq 0$. Consequently, the elements of $V_+$ are precisely those vectors $v$ satisfying $v \geq 0$, and they are termed the \emph{positive} vectors in $V$. A linear map $T: V \to V$ is called \emph{positive} if $TV_+ \subseteq V_+$. 
We write $T \ge 0$ to say that $T$ is positive. 
For two linear maps $S,T : V \to V$ we write $S \le T$ if $T-S$ is positive.

\section{Counterexamples}
\label{section:counterexamples}

For two matrices $L,R \in \bbC^{n \times n}$, consider the linear map
\begin{align*}
	T :	\bbC^{n \times n} \ni A \mapsto LAR \in \bbC^{n \times n}.
\end{align*}
One has $\spec(T) = \spec(L) \spec(R) := \{\lambda \mu \suchthat \lambda \in \spec(L), \, \mu \in \spec(R)\}$. 
Indeed, the inclusion $\supseteq$ easily follows by applying $T$
to the rank-$1$ matrices $yx^{\operatorname{T}}$, where $y \in \bbC^n$ is an eigenvector of $L$
and $x \in \bbC^n$ is an eigenvector of $R^T$.
For a proof of the converse inclusion $\subseteq$, we refer, for instance,
to \cite[Proposition~3.3.45, p.\,283]{HinrichsenPritchard2005}.
Alternatively, one can see the converse inclusion as follows: 
if $\spec(L) \spec(R)$ has $n^2$ distinct elements, the equality follows from cardinality reasons; 
the other case can be reduced to this case by a perturbation argument. 
We note that the same result even holds for bounded linear operators on infinite-dimensional Banach spaces, 
see~\cite[Theorem~10]{LumerRosenblum1959}.

\begin{example}[A counterexample on the Loewner/ice cream cone]
	\label{exa:ice-cream}
	Let $(\bbC^{2 \times 2})_\sa$ denote the four dimensional real vector space of all self-adjoint complex $2 \times 2$-matrices. 
	We endow it with the so-called \emph{Loewner cone} that consists of all positive semi-definite matrices.
	It is not difficult to check that this space is, as an ordered vector space, isomorphic to $\bbR^4$ 
	endowed with the ice cream cone
	\begin{align*}
		\{x \in \bbR^4 \suchthat x_1 \ge (x_2^2 + x_3^2 + x_4^2)^{1/2}\}.
	\end{align*}
	The complexification of $(\bbC^{2 \times 2})_\sa$ is the space $\bbC^{2 \times 2}$. 
	For the Jordan block 
	\begin{align*}
		J 
		:= 
		\begin{pmatrix}
			1 & 1 \\ 
			0 & 1
		\end{pmatrix}
	\end{align*}
	we define the linear map $T: \bbC^{2 \times 2} \to \bbC^{2 \times 2}$ by $TA := J^*AJ$. 
	This map leaves $(\bbC^{2 \times 2})_\sa$ invariant and defines a positive operator there. 
	As explained before the example, $T$ has spectrum $\{1\}$. 
	However, if we apply $T$ to the positive vector $\id_{\bbC^2} \in (\bbC^{2 \times 2})_\sa$, 
	we see that
	\begin{align*}
		T \id_{\bbC^2} 
		= 
		\begin{pmatrix}
			1 & 1 \\ 
			1 & 2
		\end{pmatrix}
		\not\ge 
		\id_{\bbC^2}.
	\end{align*}
	Indeed, $T\id_{\bbC^2}-\id_{\bbC^2}$ has determinant $-1$, and is thus not positive semi-definite.
	Therefore, $T \not\ge \id_{(\bbC^{2 \times 2})_\sa}$.
	
	We observe that $\norm{T^n}$ grows as $n^3$ as $n \to \infty$, 
	so $T$ has a generalized eigenvector of rank $3$ for the eigenvalue $1$.
	Furthermore, we observe that $T$ preserves the three-dimensional subspace of $(\bbC^{2 \times 2})_\sa)$ 
	consisting of all real symmetric matrices. 
	This space, endowed with the same order, is isomorphic to $\bbR^3$ with the ice cream cone. 
	Hence, there exists a counterexample even on $\mathbb{R}^3$ with the ice cream cone.	
\end{example}

\begin{example}[A counterexample on a four ray cone]
	\label{exa:four-ray}
	Consider the three canonical unit vectors $e_1, e_2, e_3 \in \bbR^3$ and set $z = (1,-1,1)^{\operatorname{T}}$. 
	Endow $\bbR^3$ with the cone $C$ spanned by $\{e_1,e_2,e_3,z\}$ 
	and consider the linear map $T: \bbR^3 \to \bbR^3$ whose representation matrix with respect to the canonical basis $(e_1, e_2, e_3)$ 
	is the Jordan block
	\begin{align*}
		\begin{pmatrix}
			1 & 1 & 0 \\ 
			0 & 1 & 1 \\ 
			0 & 0 & 1
		\end{pmatrix}
		.
	\end{align*}
	Clearly, $\spec(T) = \{1\}$. 
	We observe that $T$ is positive: indeed, $Te_k$ lies within the positive span of ${e_1,e_2,e_3}$, and thus, specifically in $C$, for each $k \in {1,2,3}$. 
	Moreover, $Tz = e_3 \in C$. However, one can easily verify that the vector 
	$(T-\id)z = (-1,1,0)^{\operatorname{T}}$ is not in $C$, 
	so $Tz \not\ge z$. Consequently, $T \not\ge \id$.
\end{example}

We note that a cone in $\bbR^3$ that is spanned by three rays is automatically a lattice cone 
\cite[Theorem~3.21]{AliprantisTourky2007}
and hence the Huijsmans--de Pagter problem has a positive answer for such a cone. 
Thus, the number of rays in Example~\ref{exa:four-ray} is minimal.

\section{Generalized eigenvectors}
\label{section:generalized-ev}

For the following results, we do not require the underlying vector space to be finite-dimensional.
Recall that a partially ordered vector space $V$ is termed \emph{Archimedean} if the following condition holds true for all $v,w \in V$: 
whenever $v \le \frac{1}{n}w$ for all integers $n \ge 1$, then $v \le 0$.
In finite dimensions, this condition is equivalent to the cone being closed.

\begin{theorem}
	\label{thm:generalized-ev-rank-2}
	Let $V$ be an Archimedean ordered vector space and let $T: V \to V$ be a positive linear map. 
	If $0 \le v \in V$ satisfies $(T-\id)^2 v \le 0$, then $Tv \ge v$.
\end{theorem}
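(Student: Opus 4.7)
The plan is to exploit the hypothesis $(T-\id)^2 v \le 0$ along the forward orbit of $v$ under $T$, and then invoke the Archimedean property to get the desired inequality.

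First I would apply $T^n$ to the assumption $(T-\id)^2 v \le 0$. Since $T$ is positive and $T^n$ commutes with $(T-\id)^2$, this yields $(T-\id)^2 T^n v \le 0$ for every $n \ge 0$. Setting $w_n := T^n v$, which are all positive because $v \ge 0$ and $T$ preserves the cone, this inequality can be rewritten as
\begin{equation*}
  w_{n+2} - w_{n+1} \le w_{n+1} - w_n.
\end{equation*}
In other words, the successive differences $d_n := w_{n+1} - w_n$ satisfy $d_{n+1} \le d_n$; so $(d_n)$ is a non-increasing sequence in $V$.

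Next, I would telescope: since $d_k \le d_0 = Tv - v$ for every $k \ge 0$,
\begin{equation*}
  T^n v - v \;=\; \sum_{k=0}^{n-1} d_k \;\le\; n\,(Tv - v).
\end{equation*}
Rearranging and using $T^n v \ge 0$ gives $-(Tv - v) \le \tfrac{1}{n} v$ for every integer $n \ge 1$. The Archimedean property of $V$ then forces $-(Tv - v) \le 0$, i.e.\ $Tv \ge v$, which is the claim.

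The argument is essentially a discrete concavity argument: the hypothesis $(T-\id)^2 v \le 0$ says that the orbit $n \mapsto T^n v$ is concave with respect to the order, and concavity combined with positivity of the orbit forces the initial slope to be nonnegative. I do not anticipate a serious obstacle; the only subtle point is checking that $T^n$ and $(T-\id)^2$ commute (which is immediate from polynomial calculus) and that all inequalities in the telescoping step are preserved by the order, which relies only on positivity of $T$ and closure of $V_+$ under nonnegative linear combinations. The finishing step is exactly the definition of \emph{Archimedean}.
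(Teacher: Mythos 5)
Your proof is correct and is essentially the paper's own argument: your differences $d_k = T^k(Tv - v)$ are exactly the iterates the paper chains via $T^{k+1}(T-\id)v \le T^k(T-\id)v$, and the telescoping plus the Archimedean property is identical. The ``discrete concavity'' framing is a nice way to describe it, but the underlying computation is the same.
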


Note that if $0 \leq v \in V$ is a generalized eigenvector of $T$ of rank $2$, then $(T-\operatorname{id})^2v = 0$, and thus the theorem implies that $Tv \geq v$. The proof idea is adapted from \cite[Lemma 1]{Richter1988}, where a similar argument was employed in the context of $2$-isometries and concave operators; please refer to Remark~\ref{rem:m-isometries}(b) for a more detailed explanation.

\begin{proof}[Proof of~Theorem~\ref{thm:generalized-ev-rank-2}]
	It follows from $(T-\id)^2 v \le 0$ that $T(T-\id)v \le (T-\id)v$. 
	By applying the positive operator $T^k$ to this inequality we get 
	$T^{k+1}(T-\id)v \le T^k(T-\id)v$ for all integers $k \ge 0$ and hence, 
	$T^{k}(T-\id)v \le (T-\id)v$ for all such $k$. 
	By summing up this inequality from $k=0$ to, say, $k=n-1\ge 0$ we obtain
	\begin{align*}
		T^n v - v \le n(T-\id)v,
	\end{align*}
	for each integer $n \ge 1$. Therefore $-\frac{1}{n}v \le (T-\id)v$ since $T^n v \ge 0$.
	As $V$ is Archimedean we conclude that $0 \le (T-\id)v$.
\end{proof}

The following corollary demonstrates that the operator $T$ in the counterexamples from the previous section must contain a Jordan block of size (at least) $3$.

\begin{corollary}
	\label{cor:jordan-2}
	Endow $\bbR^n$ with a closed cone. 
	If $T: \bbR^n \to \bbR^n$ is positive, satisfies $\spec(T) = \{1\}$ 
	and all its Jordan blocks have size $\le 2$, then $T \ge \id$.
\end{corollary}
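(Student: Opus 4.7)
The plan is to deduce the corollary directly from Theorem~\ref{thm:generalized-ev-rank-2} by first recasting the hypotheses as a single operator identity. Since $\spec(T) = \{1\}$, the only eigenvalue of $T$ is $1$, and the assumption that every Jordan block has size at most $2$ then forces the minimal polynomial of $T$ to divide $(X-1)^2$. The fact that we work over $\bbR$ is harmless here because the eigenvalue $1$ is real, so the real and complex Jordan structures coincide. Therefore $(T - \id)^2 = 0$ as an operator on $\bbR^n$.

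Next I would invoke that $\bbR^n$ endowed with a closed cone is Archimedean, as noted in the paragraph preceding Theorem~\ref{thm:generalized-ev-rank-2}. Hence for each $v \ge 0$ in $\bbR^n$ one has $(T-\id)^2 v = 0 \le 0$, and applying Theorem~\ref{thm:generalized-ev-rank-2} yields $Tv \ge v$. As this holds for every $v$ in the positive cone, we conclude $T \ge \id$, which is the desired inequality.

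I do not anticipate any genuine obstacle in executing this plan: the argument is a one-line application of the previous theorem, preceded only by the elementary linear-algebraic observation that the spectrum/Jordan-block constraint forces the operator identity $(T-\id)^2 = 0$. The only point worth stating explicitly is that this identity makes the hypothesis of Theorem~\ref{thm:generalized-ev-rank-2} hold uniformly on the entire cone, not just at some distinguished generalized eigenvector.
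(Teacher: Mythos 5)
Your proposal is correct and follows exactly the paper's own argument: deduce $(T-\id)^2=0$ from the spectral and Jordan-block hypotheses, note that a closed cone makes $\bbR^n$ Archimedean, and apply Theorem~\ref{thm:generalized-ev-rank-2} to every positive vector. No further comment is needed.
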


\begin{proof}
	The assumptions on the spectrum and the Jordan blocks imply that $(T-\id)^2 = 0$. 
	In particular, for each $0 \le v \in \bbR^n$ we have $(T-\id)^2v = 0$ 
	and thus, by Theorem~\ref{thm:generalized-ev-rank-2}, $(T-\id)v \ge 0$. 
	Thus $T \ge \id$.
\end{proof}

\begin{remarks}
	\label{rem:m-isometries}
	(a) 
	The examples in Section~\ref{section:counterexamples} show 
	that $(\id-T)^3v = 0$ for a positive vector $v$ does not imply $Tv \ge v$.
	
	(b)
 	There is a close connection between our arguments and the theory of \emph{$m$-isometries}, 
 	which was introduced and studied in a series of papers by Agler and Stankus \cite{AglerStankus1995a, AglerStankus1995b, AglerStankus1996}.
 	Let $H$ be a complex Hilbert space, $\calL(H)$ denote the space of bounded linear operators on $H$, 
 	and let $L \in \calL(H)$. 
 	Similar to Example~\ref{exa:ice-cream}, we define a bounded linear operator $T: \calL(H) \to \calL(H)$ by  
 	\begin{align*}
 		TA = L^*AL
 	\end{align*}
 	for all $A \in \calL(H)$. 
 	For an integer $m \ge 1$, the operator $L$ is called an \emph{$m$-isometry} if $(\id_{\calL(H)} - T)^m \id_H  = 0$, 
 	where $\id_{\calL(H)} \in \calL\big(\calL(H)\big)$ denotes the identity operator on the Banach space $\calL(H)$ 
 	and $\id_H \in \calL(H)$ denotes the identity operator on the Hilbert space $H$. 
 	In other words, $L$ is an $m$-isometry if and only if $1$ is an eigenvalue of $T$ 
 	and $\id_H$ is a generalized eigenvector of rank no more than $m$ for this eigenvalue. 
 	Note that $L$ is a $1$-isometry if and only if it as an isometry.
 	
 	If we restrict $T$ to the self-adjoint part $V$ of $\calL(H)$ (which is an ordered vector space, 
 	with the positive cone given by the positive semi-definite operators), 
 	then $T$ is a positive operator. Applying Theorem~\ref{thm:generalized-ev-rank-2} to the vector $v = \id_H$ 
 	implies in particular that if $L$ is a $2$-isometry, then $L^*L \ge \id_H$, 
 	which means that $\norm{Lx} \ge \norm{x}$ for all $x \in H$. 
 	This result was previously established in \cite[Lemma 1]{Richter1988} (even for the so-called concave operators), 
	and the proof of Theorem~\ref{thm:generalized-ev-rank-2} above is an adaptation of the proof from \cite[Lemma 1]{Richter1988}.
\end{remarks}

Here is a more general version of Theorem~\ref{thm:generalized-ev-rank-2}. The reader is invited to compare 
the statement of the following result with~\cite[Theorem 2.5]{Gu2015}.

\begin{theorem}
	\label{thm:generalized-ev-higher-rank} 
	Let $V$ be an Archimedean ordered vector space and let $T: V \to V$ be a positive linear map. 
	If $0 \le v \in V$ satisfies $(T-\id)^{r+1} v \le 0$ for an integer $r \ge 0$, then $(T-\id)^r v \ge 0$.
\end{theorem}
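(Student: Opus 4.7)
The plan is to extend the telescoping argument from Theorem~\ref{thm:generalized-ev-rank-2} by iterating it $r$ times and then closing with the Archimedean property. Write $w_j := (T-\id)^j v$; the hypothesis becomes $w_0 \ge 0$ and $w_{r+1} \le 0$, and the latter rearranges to $T w_r \le w_r$, which iterates via positivity of $T$ to $T^k w_r \le w_r$ for every $k \ge 0$.

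I then perform $r$ successive summations. Summing $T^k w_r \le w_r$ over $k = 0,\dots,N-1$ and using $(T-\id)\sum_{k=0}^{N-1} T^k = T^N - \id$ gives $T^N w_{r-1} \le w_{r-1} + N w_r$. Iterating this a total of $r$ times (with the hockey-stick identity $\sum_{m=0}^{N-1}\binom{m}{j} = \binom{N}{j+1}$ keeping the combinatorics clean) should produce
\begin{equation*}
  T^N v \;\le\; \sum_{j=0}^{r} \binom{N}{j}\, w_j \qquad \text{for all } N \ge 0.
\end{equation*}
Combined with $T^N v \ge 0$, this yields $-\binom{N}{r}\, w_r \le \sum_{j=0}^{r-1} \binom{N}{j}\, w_j$.

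To apply the Archimedean property I need the right-hand side dominated by a fixed positive vector divided by a parameter tending to infinity. The catch is that the $w_j$ for $j < r$ have no a priori sign---the examples in Section~\ref{section:counterexamples} already show that $w_1$ can fail to be positive. I plan to resolve this by expanding $w_j = \sum_{k=0}^{j}(-1)^{j-k}\binom{j}{k} T^k v$ as a difference $P_j - Q_j$ of two sums of the nonnegative vectors $\binom{j}{k}T^k v$; dropping the subtraction gives the positive envelope $w_j \le P_j + Q_j = (\id + T)^j v$. Substituting into the previous display,
\begin{equation*}
  -\binom{N}{r}\, w_r \;\le\; \sum_{j=0}^{r-1} \binom{N}{j}\,(\id + T)^j v.
\end{equation*}
A short computation shows $\binom{N}{j}/\binom{N}{r} \le 2r/N$ for $j \le r-1$ and $N \ge 2r$; so, dividing by $\binom{N}{r}$ and specialising to $N = 2rn$, I obtain $-w_r \le C/n$ for every $n \ge 1$, where $C := \sum_{j=0}^{r-1}(\id + T)^j v$ is a fixed element of $V_+$. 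The Archimedean property then forces $-w_r \le 0$, i.e.\ $(T-\id)^r v \ge 0$.

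The step I expect to be the main obstacle is precisely the passage to a positive bounding vector: the bare telescoping bound mixes the signless $w_j$'s, so the Archimedean argument cannot be applied directly. The pointwise estimate $w_j \le (\id + T)^j v$ trades sharpness for positivity of the dominating vector, and this is what allows the proof to close.
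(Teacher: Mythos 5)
Your proof is correct, and its core is the same as the paper's: the identical inductive telescoping argument (starting from $T w_r \le w_r$ and summing $r$ times with the hockey-stick identity) to arrive at $0 \le T^N v \le \sum_{j=0}^{r} \binom{N}{j}(T-\id)^j v$, followed by dividing by $\binom{N}{r}$ and letting $N \to \infty$. Where you genuinely diverge is the closing step. The paper invokes a stronger consequence of Archimedean-ness (if $0 \le s^{(1)}_n w_1 + \dots + s^{(m)}_n w_m$ for all $n$ with convergent coefficient sequences, then the limit combination is positive; cited from Aliprantis--Tourky), which applies directly even though the vectors $(T-\id)^j v$ for $j<r$ have no sign. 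You instead use only the bare definition of Archimedean given in the paper ($u \le \frac{1}{n}w$ for all $n$ implies $u \le 0$), which forces you to manufacture a single fixed dominating vector; your device of bounding $(T-\id)^j v \le (\id+T)^j v$ by splitting the binomial expansion into its positive and negative parts, together with the elementary estimate $\binom{N}{j}/\binom{N}{r} \le 2r/N$ for $j \le r-1$ and $N \ge 2r$, accomplishes this correctly. The trade-off: your version is more self-contained (no appeal to the convergence lemma) at the cost of a slightly less sharp but entirely adequate estimate; the paper's version is shorter once the lemma is granted. Your identification of the signlessness of the intermediate $w_j$ as the main obstacle is exactly right, and both routes around it are sound.
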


For the proof we need the following simple observation about Archimedean ordered vector spaces: 
Let $V$ be an Archimedean ordered vector space and let $w_1, \dots, w_m \in V$. 
Moreover, let $(s^{(1)}_n)_{n \in \bbN}$, \dots, $(s^{(m)}_n)_{n \in \bbN}$ be sequences in $\bbR$ 
that converge to real numbers $s^{(1)}$, \dots, $s^{(m)}$, respectively. 
If 
\begin{align*}
	0 \le s^{(1)}_n w_1 + \dots + s^{(m)}_n w_m
\end{align*}
for all $n \in \bbN$, then $0 \le s^{(1)} w_1 + \dots + s^{(m)} w_m$.
A proof of this fact can, for instance, be found in \cite[Theorem~1.12]{AliprantisTourky2007}.

It is most instructive to give the proof of Theorem~\ref{thm:generalized-ev-higher-rank} first in the special case 
where one even assumes that $(T-\id)^{r+1} v = 0$. 
This special case is used in Corollary~\ref{cor:generalized-ev-positive} below and the proof in this case 
is quite simple and is an adaptation of \cite[Proposition~1.5]{AglerStankus1995a}:

\begin{proof}[Proof of Theorem~\ref{thm:generalized-ev-higher-rank} in the special case $(T-\id)^{r+1} v = 0$]
	For every $n \ge r$ one has 
	\begin{align*}
		0 
		\le 
		T^n v 
		= 
		\sum_{t=0}^n \binom{n}{t} (T-\id)^t v
		= 
		\sum_{t=0}^r \binom{n}{t} (T-\id)^t v
	\end{align*}
	since $(T-\id)^t v = 0$ for all integers $t \ge r+1$. 
	For fixed $t$, the binomial coefficient $\binom{n}{t}$ is a polynomial of degree $t$ in $n$, 
	so if we divide the above inequality by $\binom{n}{r}$ and let $n \to \infty$, 
	we obtain $0 \le (T-\id)^r v$ due to the consequence of the Archimedean property 
	that was mentioned right after Theorem~\ref{thm:generalized-ev-higher-rank}.
\end{proof}

For the more general case one cannot proceed so easily since $(T-\id)^{r+1} v \le 0$ 
does not imply $(T-\id)^t v \le 0$ for all integers $t \ge r+1$. The corresponding problem for $m$-isometries 
and related operators has been considered for instance in \cite{Gu2015}. The proof of \cite[Theorem 2.5]{Gu2015} does
not seem to adapt immediately to our situation. 
Instead, one can argue as follows:

\begin{proof}[Proof of Theorem~\ref{thm:generalized-ev-higher-rank} in the general case]
	It follows from $(T-\id)^{r+1} v \le 0$ that $T (T-\id)^r v \le (T-\id)^r v$.	
	Let us prove by induction over $s$ that, for all $s \in \{0, \dots, r\}$ and all $n \ge 0$, one has% 
	\footnote{
		Recall that, for integers $n,k \ge 0$, the binomial coefficient $\binom{n}{k}$ is $0$ if $k > n$.
	} 
	\begin{align}
		\label{eq:thm:generalized-ev-higher-rank:inequality-s}
		T^n (T-\id)^{r-s} v \le \sum_{t=0}^s \binom{n}{t} (T-\id)^t (T-\id)^{r-s} v
	\end{align}
	For $s=0$ this follows for each $n \ge 0$ by applying the powers $T^0, \dots, T^{n-1}$ to the inequality 
	$T (T-\id)^r v \le (T-\id)^r v$, which indeed gives $T^n (T-\id)^r v \le (T-\id)^r v$.
	
	Now assume that~\eqref{eq:thm:generalized-ev-higher-rank:inequality-s} has already been proved for a number 
	$s \in \{0, \dots, r-1\}$ and all $n \ge 0$.
	By summing up this inequality for all $n$ from $0$ to, say, a number $n_1-1 \ge -1$, we obtain%
	\footnote{
		We employ the convention that a sum from $0$ to $-1$ is to be understood as the empty sum, which is defined as $0$.
	}
	\begin{align*}
		T^{n_1}(T-\id)^{r-s-1}v - (T-\id)^{r-s-1}v 
		& = 
		\sum_{n=0}^{n_1-1} T^n (T-\id)^{r-s} v
		\\
		& \le 
		\sum_{n=0}^{n_1-1} \sum_{t=0}^s \binom{n}{t} (T-\id)^t (T-\id)^{r-s} v 
		\\
		& = 
		\sum_{t=0}^s \binom{n_1}{t+1} (T-\id)^t (T-\id)^{r-s} v 
		\\
		& = 
		\sum_{t=1}^{s+1} \binom{n_1}{t} (T-\id)^t (T-\id)^{r-s-1} v
		,
	\end{align*}
	where we used the binomial identity $\sum_{n=0}^{n_1-1} \binom{n}{t} = \binom{n_1}{t+1}$, 
	which is true%
	\footnote{
	This binomial identity has the following nice combinatorial interpretation: when selecting 
	$t+1$ objects from $n_1$ objects labeled from $0$ through $n_1-1$, there are $\binom{n}{t}$ ways to accomplish this when the highest labeled object chosen is $n$.
	} 
	for all integers $n_1, t \ge 0$
	\cite[Formula~(11), p.\,56]{knuth}. 
	Thus,
	\begin{align*}
		T^{n_1}(T-\id)^{r-(s+1)}v 
		\le 
		\sum_{t=0}^{s+1} \binom{n_1}{t} (T-\id)^t (T-\id)^{r-(s+1)} v
		.
	\end{align*}
	This concludes the induction step and hence the proof of~\eqref{eq:thm:generalized-ev-higher-rank:inequality-s}.
	
	Now we use~\eqref{eq:thm:generalized-ev-higher-rank:inequality-s} for $s=r$. 
	Consequently, due to the positivity of $v$ and $T$, 
	\begin{align*}
		0 
		\le 
		T^n v 
		\le 
		\sum_{t=0}^r \binom{n}{t} (T-\id)^t v
	\end{align*}
	for each integer $n \ge 0$. 
	Now we divide by $\binom{n}{r}$ and let $n \to \infty$, 
	which gives $0 \le (T-\id)^r v$ due to the property of Archimedean spaces that was cited 
	right after Theorem~\ref{thm:generalized-ev-higher-rank}.
\end{proof}

Coming back to Theorem~\ref{thm:generalized-ev-higher-rank} in the special case $(T-\id)^{r+1} v = 0$, 
we note the following spectral theoretic corollary.

\begin{corollary}
	\label{cor:generalized-ev-positive}
	Let $V$ be an Archimedean ordered vector space and let $T: V \to V$ be a positive linear map. 
	If $\lambda \in (0,\infty)$ is an eigenvalue of $T$ with a generalized eigenvector in $V_+$, 
	then the eigenvalue $\lambda$ even has an eigenvector in $V_+$.
\end{corollary}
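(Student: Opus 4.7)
The plan is to rescale to reduce to the case $\lambda = 1$ and then apply the special case of Theorem~\ref{thm:generalized-ev-higher-rank} (the one in which $(T-\id)^{r+1}v = 0$). Concretely, let $v \in V_+$ be a generalized eigenvector of $T$ for $\lambda > 0$, that is, $v \ne 0$ and $(T - \lambda \id)^{k} v = 0$ for some integer $k \ge 1$. Choose $r \ge 0$ to be the minimal integer such that $(T - \lambda \id)^{r+1} v = 0$; such an $r$ exists because $v \ne 0$, and minimality guarantees that
\begin{align*}
    w := (T - \lambda \id)^r v \ne 0.
\end{align*}
Since $(T - \lambda \id) w = 0$, the vector $w$ is an eigenvector of $T$ for the eigenvalue $\lambda$. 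Hence it only remains to show $w \ge 0$.

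To this end, I would set $S := \frac{1}{\lambda} T$, which is again a positive linear map on $V$ because $\lambda > 0$. Then $(S - \id)^{r+1} v = \lambda^{-(r+1)} (T - \lambda \id)^{r+1} v = 0$, so the special case of Theorem~\ref{thm:generalized-ev-higher-rank} (in which one even has equality to zero) applies to $S$ and $v$ and yields
\begin{align*}
    0 \le (S - \id)^r v = \lambda^{-r} w.
\end{align*}
Since $\lambda^{-r} > 0$, this gives $w \ge 0$, and together with $w \ne 0$ we obtain a nonzero positive eigenvector of $T$ for the eigenvalue $\lambda$.

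There is no real obstacle here; the only subtle point is the choice of the minimal $r$, which is needed to ensure that the resulting eigenvector $w$ is nonzero. The rescaling $T \mapsto T/\lambda$ is harmless because $\lambda$ is assumed positive, so positivity is preserved and the conclusion of Theorem~\ref{thm:generalized-ev-higher-rank} transfers back to $T$ after multiplying by $\lambda^r$.
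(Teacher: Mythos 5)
Your proof is correct and follows essentially the same route as the paper: rescale to reduce to $\lambda = 1$, take the minimal $r$ with $(T-\lambda\id)^{r+1}v = 0$ so that $w = (T-\lambda\id)^r v \neq 0$ is an eigenvector, and apply the special case of Theorem~\ref{thm:generalized-ev-higher-rank} to conclude $w \ge 0$. The only cosmetic difference is that you keep the rescaled operator $S = T/\lambda$ explicit and transfer the conclusion back by multiplying by $\lambda^r$, whereas the paper simply assumes $\lambda = 1$ from the outset.
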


\begin{proof}
	By replacing $T$ with $T/\lambda$ we may assume that $\lambda = 1$. 
	Let $0 \not= v \in V_+$ be a generalized eigenvector of $T$ for the eigenvalue $1$. 
	If $v$ is itself an eigenvector, there is nothing to prove. 
	If $v$ is not an eigenvector, there exists an integer $r \ge 1$ such that $(T-\id)^{r+1} v = 0$, 
	but $w := (T-\id)^r v \not= 0$. 
	Hence, $w$ is an eigenvector of $T$ for the eigenvalue $1$ and 
	according to Theorem~\ref{thm:generalized-ev-higher-rank} one has $w \ge 0$.
\end{proof}

Recall from the introduction that a lattice isomorphism $T$ on a Banach lattice with spectrum $\{1\}$ 
is known to be the identity operator. 
It is natural to ask whether the same is true on more general ordered vector spaces -- 
where the natural replacement of the assumption that $T$ be a lattice isomorphism is that $T$ and $T^{-1}$ be both positive. 
Example~\ref{exa:ice-cream} shows that the answer is negative. 
However, one still has the following partial positive result. 

\begin{corollary}
	\label{cor:odd}
	Let $V$ be an Archimedean ordered vector space and let $T: V \to V$ be a bijective linear map 
	such that both $T$ and $T^{-1}$ are positive. 
	\begin{enumerate}[label=\upshape(\alph*)]
		\item 
		Let $r \ge 1$ be an odd integer and let $v \in V_+$. 
		If $(T-\id)^{r+1} v \le 0$, then $(T-\id)^r v = 0$.
	\end{enumerate}
	Assume now in addition that $V_+-V_+ = V$.
	\begin{enumerate}[resume, label=\upshape(\alph*)]
		\item 
		If $r \ge 1$ is an odd integer and $(T-\id)^{r+1} \le 0$, then $(T-\id)^r = 0$.
		
		\item 
		In particular, if $(T-\id)^2 \le 0$, then $T=\id$.
	\end{enumerate}
\end{corollary}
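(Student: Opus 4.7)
The plan is to apply Theorem~\ref{thm:generalized-ev-higher-rank} twice, once to $T$ and once to its positive inverse $T^{-1}$, and combine the two conclusions. The central tool is the operator identity
\begin{equation*}
(T - \id)^k = (-1)^k T^k (T^{-1} - \id)^k,
\end{equation*}
valid for every integer $k \ge 0$ because $T$ commutes with $T^{-1} - \id$, together with the fact that positive operators preserve inequalities.

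For part~(a), I would first apply Theorem~\ref{thm:generalized-ev-higher-rank} directly to $T$ and $v$ to obtain $(T-\id)^r v \ge 0$. To get the reverse inequality, I would use the identity with $k = r+1$: since $r$ is odd, $r+1$ is even, so $(T-\id)^{r+1} = T^{r+1}(T^{-1}-\id)^{r+1}$. Applying the positive operator $T^{-(r+1)}$ to the hypothesis $(T-\id)^{r+1}v \le 0$ yields $(T^{-1}-\id)^{r+1}v \le 0$. Since $T^{-1}$ is positive and $V$ is Archimedean, Theorem~\ref{thm:generalized-ev-higher-rank} applied to $T^{-1}$ gives $(T^{-1}-\id)^r v \ge 0$. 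Now using the identity with $k = r$ odd, i.e.\ $(T^{-1}-\id)^r = -T^{-r}(T-\id)^r$, and applying the positive operator $T^r$, I would conclude $(T-\id)^r v \le 0$. Combining the two inequalities yields $(T-\id)^r v = 0$.

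Parts~(b) and~(c) are then essentially bookkeeping. For~(b), applying part~(a) to each $v \in V_+$ gives $(T-\id)^r v = 0$ on $V_+$; the extra hypothesis $V = V_+ - V_+$ extends this by linearity to all of $V$, so $(T-\id)^r = 0$ as an operator. Part~(c) is the special case $r = 1$ of~(b).

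The only subtle point is the parity bookkeeping: the argument crucially uses that $r$ is odd so that $r+1$ is even, ensuring that the transition $T \leftrightarrow T^{-1}$ preserves the sign of the inequality that feeds into the second application of Theorem~\ref{thm:generalized-ev-higher-rank}. Had $r$ been even, the same reasoning would produce $(T^{-1}-\id)^{r+1}v \ge 0$, which has the wrong sign for that theorem. So apart from this parity check, the proof is a clean two-step application of the earlier result.
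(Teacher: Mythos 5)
Your proof is correct and follows essentially the same route as the paper: apply Theorem~\ref{thm:generalized-ev-higher-rank} once to $T$, transfer the hypothesis to $T^{-1}$ by multiplying with the positive operator $(T^{-1})^{r+1}$ and using that $r+1$ is even, apply the theorem again to $T^{-1}$, and translate back via $T^r$ using that $r$ is odd. The parity bookkeeping you highlight is exactly the point the paper's argument turns on as well.
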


\begin{proof}
	(a) 
	By Theorem~\ref{thm:generalized-ev-higher-rank} we have $(T-\id)^r v \ge 0$.
	To also obtain the converse inequality, 
	first multiply the inequality $(T-\id)^{r+1} v \le 0$ with the positive operator $(T^{-1})^{r+1}$ 
	to obtain $(\id-T^{-1})^{r+1} v \le 0$. 
	Since $r+1$ is even, this implies that $(T^{-1} - \id)^{r+1} v \le 0$.
	Hence, another application of Theorem~\ref{thm:generalized-ev-higher-rank} gives $(T^{-1}-\id)^r v \ge 0$. 
	Now we multiply this inequality with the positive operator $T^r$ to obtain $(\id-T)^r v \ge 0$. 
	Since $r$ is odd, this gives $(T-\id)^r v \le 0$.
	
	(b) 
	This is an immediate consequence of~(a).
	
	(c) 
	This is a special case of~(b) for $r=1$.
\end{proof}

\bibliographystyle{plain}
\bibliography{literature}

\end{document}